\newtheorem{theorem}{Theorem}[section]
\newtheorem{lemma}[theorem]{Lemma}
\theoremstyle{example}
\newtheorem{proposition}[theorem]{Proposition}
\newtheorem{corelemma}[theorem]{Core Lemma}
\newtheorem{corollary}[theorem]{Corollary}
\newtheorem{corollaries}[theorem]{Corollaries}
\theoremstyle{definition}
\newtheorem{definition}[theorem]{Definition}
\newtheorem{question}[theorem]{Question}
\journal{...}
\begin{document}

\title{Resolvability in products and squares}

\author{Anton E. Lipin}

\address{Krasovskii Institute of Mathematics and Mechanics, \\ Ural Federal
 University, Yekaterinburg, Russia}

\ead{tony.lipin@yandex.ru}

\begin{abstract} Suppose $X$ and $Y$ are topological spaces, $|X| = \Delta(X)$ and $|Y| = \Delta(Y)$. We investigate resolvability of the product $X \times Y$. We prove that:
	
	I. If $|X| = |Y| = \omega$ and $X,Y$ are Hausdorff, then $X \times Y$ is maximally resolvable;
	
	II. If $2^\kappa = \kappa^+$, $\{|X|, \mathrm{cf}|X|\} \cap \{\kappa, \kappa^+\} \ne \emptyset$ and $\mathrm{cf}|Y| = \kappa^+$, then the space $X \times Y$ is $\kappa^+$-resolvable.
	In particular, under GCH the space $X^2$ is $\mathrm{cf}|X|$-resolvable whenever $\mathrm{cf}|X|$ is an isolated cardinal;
	
	III. ($\frak{r} = \frak{c}$) If $\mathrm{cf}|X| = \omega$ and $\mathrm{cf}|Y| = \mathrm{cf}(\frak{c})$, then the space $X \times Y$ is $\omega$-resolvable.
	If, moreover, $\mathrm{cf}(\frak{c}) = \omega_1$, then the space $X \times Y$ is $\omega_1$-resolvable.
\end{abstract}


\begin{keyword} Resolvability, Products of topological spaces, Ramsey theory

\MSC[2020] 54A25, 54A35, 54B10, 05D10

\end{keyword}

\maketitle 


\section{Introduction}

Suppose $\kappa$ is a cardinal. A topological space $X$ is called {\it $\kappa$-resolvable}, if $X$ can be partitioned into $\kappa$-many dense subsets. Clearly, every $\kappa$-resolvable space $X$ satisfies the inequality $\Delta(X) \geq \kappa$, where
$$\Delta(X) = \min\{|U| : U \text{ is a nonempty open subset of } X\}$$
is a {\it dispersion character} of the space $X$. The space $X$ is called {\it maximally resolvable}, if it is $\Delta(X)$-resolvable. If $\kappa \leq \Delta(X)$, but $X$ is not $\kappa$-resolvable, the space $X$ is called {\it $\kappa$-irresolvable}. $2$-resolvable and $2$-irresolvable spaces are also called {\it resolvable} and {\it irresolvable}, respectively \cite{Hewitt1943, Ceder1964}.

A space $X$ is called {\it isodyne}, if $|X| = \Delta(X)$.
Quite often the investigation of resolvability in some class of spaces can be reduced to the case of an isodyne space.
This is explained by the following two observations.
First, if $X$ is a space and $U$ is a nonempty open subset of $X$, then there is a nonempty open subset $V \subseteq U$ such that the subspace $V$ is isodyne.
Second, if every nonempty open subset of a space $X$ contains a nonempty $\kappa$-resolvable subspace, then the space $X$ is $\kappa$-resolvable as well \cite[Theorem 4]{Ceder1964}.

In this paper we focus on a problem of resolvability in products of spaces. The observations from the previous paragraph show that it is sufficient to investigate resolvability in products of isodyne spaces.

It was noted by V.I.~Malykhin in 1973 that the square of an isodyne crowded space is resolvable \cite[text under the proof of Theorem 3]{Malykhin1973}. The following generalization may be technically new.

\begin{theorem}\label{T1}
	If $X$ is an infinite isodyne space, then for every natural $n$ the space $X^n$ is $n!$-resolvable.
\end{theorem}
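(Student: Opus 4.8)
The plan is to exploit the single structural consequence of isodyneness: since $\Delta(X) = |X|$, every nonempty open subset $U \subseteq X$ satisfies $|X| = \Delta(X) \le |U| \le |X|$, hence $|U| = |X|$. Write $\kappa = |X|$, which is infinite, and fix a bijective enumeration $X = \{x_\alpha : \alpha < \kappa\}$. The whole argument rests on one elementary observation about cardinals: any subset $S \subseteq \kappa$ with $|S| = \kappa$ is cofinal in $\kappa$, since a bounded subset of $\kappa$ has cardinality strictly below $\kappa$. In particular, for each nonempty open $U$ the index set $\{\alpha < \kappa : x_\alpha \in U\}$ has size $\kappa$ and is therefore cofinal in $\kappa$.

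Next I would turn the $n!$ permutations into the required dense pieces. Call a tuple $(x_{\alpha_1}, \dots, x_{\alpha_n}) \in X^n$ \emph{generic} if the indices $\alpha_1, \dots, \alpha_n$ are pairwise distinct, and let $R \subseteq X^n$ be the set of non-generic tuples (those with a repeated index). To each generic tuple associate the unique permutation $\pi \in S_n$ that sorts its indices, i.e. $\alpha_{\pi(1)} < \alpha_{\pi(2)} < \dots < \alpha_{\pi(n)}$, and let $P_\pi$ be the set of generic tuples with sorting permutation $\pi$. The family $\{P_\pi : \pi \in S_n\}$ is a partition of $X^n \setminus R$ into $n!$ blocks.

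The key step is to show that each $P_\pi$ is dense in $X^n$. It suffices to meet an arbitrary nonempty basic open box $U_1 \times \dots \times U_n$, where every $U_i$ is nonempty open and hence of size $\kappa$. I would build a point of $P_\pi$ inside the box by a greedy selection along the order prescribed by $\pi$: choose $\alpha_{\pi(1)}$ with $x_{\alpha_{\pi(1)}} \in U_{\pi(1)}$ arbitrarily; having chosen $\alpha_{\pi(1)} < \dots < \alpha_{\pi(j-1)}$, use the cofinality of $\{\alpha : x_\alpha \in U_{\pi(j)}\}$ to pick $\alpha_{\pi(j)}$ in this set above all previously chosen indices. The resulting tuple lies in the box, has distinct indices, and is sorted by $\pi$, so it belongs to $P_\pi$. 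Finally, absorbing the leftover set $R$ into a single block, say replacing $P_{\mathrm{id}}$ by $P_{\mathrm{id}} \cup R$, keeps that block dense and leaves the other $n!-1$ blocks untouched, yielding a partition of all of $X^n$ into $n!$ dense sets.

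I do not anticipate a genuine obstacle here: no separation axiom or crowdedness is used, and the argument is essentially bookkeeping on top of the cofinality fact. The only point that deserves care is the greedy construction — one must select the indices strictly in the order dictated by $\pi$, so that the cofinality argument guarantees pairwise distinctness and the correct relative ordering simultaneously; performing the selection in the factor order $1, 2, \dots, n$ instead would not directly pin down the prescribed pattern $\pi$.
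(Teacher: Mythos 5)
Your proof is correct and follows essentially the same route as the paper: partition the tuples with pairwise distinct indices according to the permutation that sorts them, and verify density of each block by a recursive choice of indices in the order dictated by the permutation, using the fact that a size-$\kappa$ subset of the cardinal $\kappa$ is cofinal. The only (welcome) refinement is that you explicitly absorb the tuples with repeated indices into one block to obtain an honest partition of $X^n$, a bookkeeping point the paper's proof leaves implicit.
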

\begin{proof}
	Choose any enumeration $X = \{x_\alpha : \alpha < |X|\}$.
	For every permutation $f : n \leftrightarrow n$ define $D_f$ as the set of all points $(x_{\alpha_0}, x_{\alpha_1}, \ldots, x_{\alpha_{n-1}}) \in X^n$ such that $\alpha_{f(0)} < \alpha_{f(1)} < \ldots < \alpha_{f(n-1)}$.
	
	It is clear that the sets $D_f$ are pairwise disjoint and their amount is $n!$. Let us show that every $D_f$ is dense in $X^n$.
	Take any nonempty open set of the form $W = U_0 \times U_1 \times \ldots \times U_{n-1}$ in $X^n$. Recursively on $k < n$ choose ordinals $\alpha_{f(k)} < |X|$ such that $x_{\alpha_{f(k)}} \in U_{\alpha_{f(k)}}$ and $\alpha_{f(0)} < \alpha_{f(1)} < \ldots < \alpha_{f(n-1)}$ (this is possible since $|U_{f(k)}| = |X|$).
	The point $(x_{\alpha_0}, x_{\alpha_1}, \ldots, x_{\alpha_{n-1}})$ belongs to $D_f$ and $W$ at the same time.
\end{proof}

Of course, one can easily expand this result to products of distinct isodyne spaces of equal cardinality.

Malykhin constructed under CH a countable crowded $T_1$-space $X$ such that $X^2$ is $4$-irresolvable \cite[Corollary 5]{Malykhin1973}.
In the survey \cite{Pavlov2007} O.~Pavlov asked if there is an infinite Hausdorff (regular) isodyne space $X$ such that $X^2$ is not $3$- or $\omega$-resolvable.
He also asked is there such a countable space.

In 1996 A.~Be{\v s}lagi{\' c} and R.~Levy proved that existence of two (Tychonoff, $0$-dimensional) spaces with irresolvable product is equiconsistent with existence of a measurable cardinal \cite{BL1996}. In 2023 I.~Juh{\'a}sz, L.~Soukup and Z.~Szentmikl{\'o}ssy proved that existstence of $n$ measurable cardinals implies existence of $n+1$ Hausdorff $0$-dimensional spaces whose product is irresolvable \cite{JSS2023}. They also proved the following

\begin{theorem}[Juh{\'a}sz, Soukup, Szentmikl{\'o}ssy, \cite{JSS2023}]\label{TJSS2023}
	The following two statements are consistent modulo a measurable cardinal:
	
	\begin{enumerate}
		\item There is a Hausdorff $0$-dimensional isodyne space $X$ such that $\omega_2 \leq |X| \leq 2^{\omega_1}$ and for any countable space $Y$ the product $X \times Y$ is $\omega_2$-irresolvable.
		
		\item There is a monotonically normal isodyne space $X$ such that $|X| = \aleph_\omega$ and for any countable space $Y$ the product $X \times Y$ is $\omega_1$-irresolvable.
	\end{enumerate}
\end{theorem}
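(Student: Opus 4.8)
Since this is a consistency result relative to a measurable cardinal, the plan is to start from a model with a measurable $\kappa$ carrying a normal ultrafilter $\mathcal{U}$, and to transport the completeness of $\mathcal{U}$ into a topology whose irresolvability cannot be defeated by a countable second factor. First I would set up the combinatorial reduction. Suppose $\{D_\xi : \xi < \lambda\}$ were a partition of $X \times Y$ into $\lambda$ dense sets (with $\lambda = \omega_2$ in part (1) and $\lambda = \omega_1$ in part (2)) and write $Y = \{y_n : n < \omega\}$. For those $y_n$ that are isolated in $Y$ the fibre $X \times \{y_n\}$ is open, so each $D_\xi$ restricts to a dense subset of a copy of $X$; here $\lambda$-irresolvability of $X$ alone suffices, and taking $Y$ discrete already forces $X$ itself to be $\lambda$-irresolvable. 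The whole difficulty is concentrated in the crowded part of $Y$, where no fibre is open and the $D_\xi$ may exploit the topology of $Y$.

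To build the witnessing space in part (1), I would make $X$ a $0$-dimensional Hausdorff isodyne space whose clopen algebra is steered by $\mathcal{U}$ (after first collapsing or reflecting so that the relevant cardinal lands in the interval $[\omega_2, 2^{\omega_1}]$ and $2^{\omega_1}$ is large enough to host the clopen base). The design goal is that every dense subset of every nonempty open set concentrates on a $\mathcal{U}$-large set, in the sense that any partition into $\lambda < \kappa$ pieces must, by $\kappa$-completeness of $\mathcal{U}$, place one piece off a set in $\mathcal{U}$ and hence render it non-dense. The $\kappa$-completeness is exactly what neutralizes the countably many constraints coming from the points of $Y$: a countable family of $\mathcal{U}$-large requirements still has $\mathcal{U}$-positive intersection, which is the mechanism that should push $\omega_2$-irresolvability through the product.

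For part (2) the obvious tension is that monotone normality strongly pulls a space toward maximal resolvability, so one cannot simply reuse a measurable at full strength. Here I would instead apply a Prikry-type forcing to change the cofinality of $\kappa$ to $\omega$ and then collapse so that the carrier becomes $\aleph_\omega$, retaining along the Prikry sequence a residual, still $\omega_1$-complete, ultrafilter-like structure. On $\aleph_\omega$ I would realize $X$ as a GO- or tree-derived space, so that monotone normality comes for free from a linear order, while arranging the order so that the residual $\omega_1$-completeness again forbids splitting into $\omega_1$ dense pieces. The singularity $\mathrm{cf}|X| = \omega$ is what blocks the usual maximal-resolvability theorems for monotonically normal spaces and leaves room for $\omega_1$-irresolvability.

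The main obstacle, in both parts, is the universal quantifier over crowded countable $Y$: for such $Y$ one cannot restrict a dense $D_\xi$ to a fibre, so the argument must show that the completeness of the (residual) ultrafilter survives the countable spreading that the topology of $Y$ permits. Making this precise amounts to verifying that any partition of $X \times Y$ into $\lambda$ pieces, when traced through the countably many neighbourhood filters of $Y$, still yields $\lambda$ sets one of which is $\mathcal{U}$-small on some basic rectangle and therefore fails to be dense. A secondary obstacle specific to part (2) is maintaining monotone normality and the exact value $|X| = \aleph_\omega$ through the Prikry collapse without destroying the combinatorial indivisibility on which the whole construction rests.
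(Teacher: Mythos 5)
You have attempted to reconstruct a result that the paper does not prove at all: Theorem~\ref{TJSS2023} is stated as an attributed citation of Juh\'asz, Soukup and Szentmikl\'ossy \cite{JSS2023}, quoted purely as background motivation for the paper's own positive results. So there is no proof in the paper to compare against; the correct move in this context is simply the reference. Judged on its own merits, your text is a research outline rather than a proof: in neither part is the space $X$ actually constructed (``a clopen algebra steered by $\mathcal{U}$'' and ``a GO- or tree-derived space'' are placeholders), and the decisive step --- that \emph{every} partition of $X \times Y$ into $\lambda$ pieces, for \emph{every} crowded countable $Y$, leaves some piece non-dense --- is asserted, not derived; that assertion is precisely the theorem.

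Beyond incompleteness, both of your central mechanisms fail concretely. For part (1): in ZFC, the least cardinal carrying a free countably complete ultrafilter is measurable, hence strongly inaccessible, hence strictly greater than $2^{\omega_1}$; consequently no set of size at most $2^{\omega_1}$ carries any countably complete free ultrafilter, so a space $X$ with $\omega_2 \leq |X| \leq 2^{\omega_1}$ cannot support the ``$\kappa$-completeness neutralizes the countably many constraints from $Y$'' argument you describe. After the collapse you allude to, the ground-model measure survives only as an ideal with generic-ultrapower properties (precipitous or saturated, \`a la Kunen), not as a countably complete ultrafilter, and the irresolvability argument must be rebuilt around that weaker object --- a step your sketch does not address. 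For part (2): the claimed ``residual, still $\omega_1$-complete, ultrafilter-like structure'' after Prikry forcing does not exist. If $(c_n)_{n<\omega}$ is the Prikry sequence, then each $\kappa \setminus \{c_n\}$ lies in the filter generated by the ground-model measure, yet $\bigcap_{n<\omega} \left( \kappa \setminus \{c_n\} \right) = \kappa \setminus \{c_n : n < \omega\}$ contains no ground-model measure-one set, because by the Prikry property every such set contains all but finitely many of the $c_n$; hence the generated filter is not countably complete in the extension, and the mechanism on which your entire part (2) rests is unavailable. These are genuine gaps, not matters of polish; within this paper the statement should simply be cited to \cite{JSS2023}.
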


As for infinite products, A.G.~Elkin proved in 1969 that the infinite product of non-single-point spaces is $\frak{c}$-resolvable \cite{Elkin1969}.
Apparently, it is an open problem whether such a product is maximally resolvable.

\section{Resolvability in products of countable spaces}

Let us fix some terminology.
Suppose $X$ and $Y$ are spaces.
We say that sets of the form $M \times \{y\} \subseteq X \times Y$ are {\it horizontal},
whereas sets of the form $\{x\} \times N \subseteq X \times Y$ are {\it vertical}.
If a set $A \subseteq X \times Y$ is horizontal or vertical, we say that $A$ is {\it linear}.
If one of sets $A,B \subseteq X \times Y$ is horizontal and the other is vertical, we say that $A$ and $B$ are {\it orthogonal} and we write $A \perp B$.
If both sets $A$ and $B$ are horizontal or both are vertical, we say that $A$ and $B$ are {\it collinear} and write $A \parallel B$.
Sets of the form $X \times \{y\}$ and $\{x\} \times Y$ are called {\it fibers}.

\begin{theorem}\label{TCountable}
	The product of two countable crowded Hausdorff spaces is maximally resolvable.
\end{theorem}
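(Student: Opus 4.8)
The plan is to exploit that $X \times Y$ is again a countable crowded Hausdorff space, so $\Delta(X\times Y)=\omega$ and maximal resolvability here means $\omega$-resolvability; it is therefore enough to produce $\omega$ pairwise disjoint dense subsets, since any leftover can be absorbed into one of them. Fixing enumerations, a family $\{D_k\}_{k<\omega}$ is such a resolution precisely when the colouring $c\colon X\times Y\to\omega$ with $c^{-1}(k)=D_k$ attains every value on every basic open rectangle $U\times V$. I would emphasise at once that the factors cannot be treated one at a time: a countable crowded Hausdorff space need not be resolvable at all (a maximal crowded refinement of the rational topology is irresolvable), so the construction must be genuinely two-dimensional, and one cannot hope to ignore the topology — already the familiar fact that no single $F\colon\omega\times\omega\to\omega$ is onto on every infinite rectangle (else even the $T_1$ square of \cite{Malykhin1973} would be $\omega$-resolvable) shows the colouring cannot be a crude product device.

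I would then recast density through the fibers defined above. Writing $c_y\colon X\to\omega$ for the restriction of $c$ to the horizontal fiber $X\times\{y\}$, the class $D_k$ is dense if and only if for every open $U\subseteq X$ the orthogonal trace
\[ G_{U,k}=\{\, y\in Y : (\exists x\in U)\ c_y(x)=k \,\} \]
is dense in $Y$; equivalently $\bigcup_{y\in V}c_y[U]=\omega$ for all open $U,V$. This makes the difficulty transparent. If all fibers are constant, $c_y\equiv h(y)$, then $D_k$ is dense iff $h^{-1}(k)$ is dense in $Y$ for every $k$, i.e.\ iff $Y$ is $\omega$-resolvable; if all fibers are equal, $c_y\equiv d$, the same condition falls on $X$. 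Since neither may be available, the colours missing on a given fiber over $U$ must be supplied by \emph{other} fibers lying in the same $V$, and it is Hausdorffness of $Y$ — which drops infinitely many freely programmable fibers into every $V$ — that has to compensate for the possible irresolvability of $X$, and symmetrically.

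To drive this I would extract from each factor the one structure that countable crowded Hausdorff spaces reliably carry: a tree $\pi$-network. Using countability, crowdedness and Hausdorff separation (note that crowded Hausdorff forces infinite cellularity, so infinite branching is available), build nonempty open sets $\{U_s : s\in\omega^{<\omega}\}$, and $\{V_t\}$ likewise, so that for each $s$ the children $U_{s^\frown n}$ are pairwise disjoint subsets of $U_s$ with dense union, and distinct points are eventually separated. The decisive feature is that every nonempty open $U$ \emph{captures a branch}: there is $\gamma\in\omega^\omega$ with $U\cap U_{\gamma\restriction\ell}\neq\emptyset$ for all $\ell$, so $U$ has points at every depth, and consequently $(U\times V)\cap(U_{\gamma\restriction\ell}\times V_{\delta\restriction m})\neq\emptyset$ for all $\ell,m$ whenever $V$ captures $\delta$. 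I would define $c$ by a Ramsey-type rule reading the joint position of $x$ and $y$ along their respective branches, arranged so that along the two captured branches successive levels realise every colour, and verify density by a pigeonhole over levels.

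The main obstacle is exactly this last step. Per-coordinate data is insufficient, so the rule must correlate the two tree structures while respecting that each factor may be irresolvable and may have uncountable $\pi$-weight — so that no countable $\pi$-base is available and a naive dovetailing over basic rectangles and colours is impossible. Concretely, the heart of the matter is a partition statement for the \emph{pair} of open-set families: a single combinatorial law on $X\times Y$ must place every colour in every rectangle $U\times V$ simultaneously, and it is here that the Hausdorff-built branches together with a Ramsey-theoretic input become indispensable. I expect the verification that no colour is avoided on any rectangle, rather than the setup, to absorb essentially all of the work.
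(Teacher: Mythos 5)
There is a genuine gap: your text is a plan whose decisive step is never carried out. You set up the tree $\pi$-networks and the branch-capture observation correctly (both are fine for countable crowded Hausdorff spaces), but the colouring $c$ itself is never defined --- it is only described as ``a Ramsey-type rule reading the joint position of $x$ and $y$ along their respective branches'' --- and the density verification is explicitly deferred (``I expect the verification \dots to absorb essentially all of the work''). That verification \emph{is} the theorem; without it nothing has been proved. Worse, there are concrete reasons the deferred step cannot be finished in the form you describe. First, capturing a branch only gives you, for each $\ell$, some point of $U$ lying in $U_{\gamma\restriction\ell}$; you have no control over that point's tree position \emph{below} level $\ell$, and points may lie on infinite branches, so no well-defined finite ``depth'' is available to feed into a pigeonhole over levels. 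Second, any rule that factors through numerical levels attached to the two coordinates separately --- $c(x,y)=g(\mathrm{level}(x),\mathrm{level}(y))$ with $g:\omega\times\omega\to\omega$ --- runs squarely into the Ramsey obstruction recorded in Corollary \ref{POmega} of this paper: no map $\omega\times\omega\to 3$ attains even $3$ values on every infinite rectangle, and the level sets realised inside $U$ and $V$ are exactly such uncontrolled infinite sets. So the ``single combinatorial law'' you hope for cannot be purely a function of per-coordinate combinatorial data; it must exploit the topology pointwise, which is the part your proposal leaves open.

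For contrast, the paper's proof avoids global colouring schemes on open sets entirely. It enumerates $X\times Y=\{p_n:n<\omega\}$ and recursively attaches to each \emph{point} $p_n$ a crowded linear set $K(p_n)\ni p_n$ (a relatively open piece of a horizontal or vertical fiber), using Hausdorffness only to arrange that collinear $K$-sets are disjoint and orthogonal ones have closures meeting in at most the point $p_n$. These coherence conditions force every point to lie in $K(b)$ for at most one $b$ other than itself, which yields a well-defined rank $r(a)<\omega$ (the length of the unique chain $a\in K(a_1)$, $a_1\in K(a_2),\dots$). Crowdedness of $K(a)$ gives, for any open $W\ni a$, a point $b\in W$ with $r(b)=r(a)+1$, so the rank values on every nonempty open set are cofinite in $\omega$; composing $r$ with any map $f:\omega\to\omega$ having all preimages infinite partitions $X\times Y$ into $\omega$ dense sets. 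Note how this sidesteps your obstruction: the ``level'' is attached to points of the product directly, not to the two coordinates, and cofiniteness of its range on every open set is what replaces the impossible two-coordinate pigeonhole.
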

\begin{proof}
	Suppose $X,Y$ are countable crowded Hausdorff spaces.
	Fix any enumeration $X \times Y = \{p_n : n < \omega\}$.
	
	Recursively on $n < \omega$ we will choose in $X \times Y$ crowded linear sets $K(p_n)$ in such a way that $p_n \in K(p_n)$ and the following two conditions hold for all $m < n < \omega$:
	\begin{enumerate}
		\item[(1)] if $K(p_m) \parallel K(p_n)$, then $K(p_m) \cap K(p_n) = \emptyset$;
		
		\item[(2)] if $K(p_m) \perp K(p_n)$, then the intersection $\overline{K(p_m)} \cap \overline{K(p_n)}$ is either $\{p_n\}$ or empty.
	\end{enumerate}
	
	To see that it is possible, suppose that $n < \omega$ and the sets $K(p_i)$ are chosen for all $i < n$.
	If $n = 0$, let us take as $K(p_0)$ any fiber that contains the point $p_0$. Suppose $n > 0$.
	The property (2) implies that the point $p_n$ cannot belong to the closures of two orthogonal sets of the form $K(p_i)$.
	Hence, we can choose a fiber $L \subseteq X \times Y$ such that $p_n \in L$ and $L \perp K(p_i)$ whenever $p_n \in \overline{K(p_i)}$.
	
	Now, for every $i < n$ let us choose a neighborhood $O_i(p_n)$ of the point $p_n$ in the subspace $L$ in the following way:
	\begin{enumerate}
		\item[I.] If $K(p_i) \subseteq L$, then $O_i(p_n) = L \setminus \overline{K(p_i)}$;
		
		\item[II.] If $K(p_i) \perp L$ and $\overline{K(p_i)} \cap L \ne \{p_n\}$, then $\overline{O_i(p_n)} \cap \overline{K(p_i)} = \emptyset$
		\\ (it is possible since $X$ and $Y$ are Hausdorff).
	\end{enumerate}
	
	Finally, we define $K(p_n) = \bigcap_{i < n} O_i(p_n)$. The sets $K(p_n)$ are constructed.
	
	It follows from the properties (1,2) that for every point $a \in X^2$ there is at most one other point $b$ such that $a \in K(b)$.
	Hence, there is a unique finite sequence $a_0, \ldots, a_n$ such that $a_0 = a$, $a_i \in K(a_{i+1}) \setminus \{a_{i+1}\}$ for $i < n$ and $a_n \notin K(b)$ whenever $b \ne a_n$.
	Let us denote $r(a) = n$.
	
	Since the sets $K(a)$ are crowded, for every open set $W \subseteq X^2$ and any point $a \in W$ there is a point $b \in W$ (which belongs to $K(a) \setminus \{a\}$) such that $r(b) = r(a) + 1$.
	Hence, for every nonempty open set $W \subseteq X^2$ the set $\{r(a) : a \in W\}$ is cofinite in $\omega$.
	
	It remains to take any function $f : \omega \to \omega$ such that for any $n < \omega$ the preimage $f^{-1}(n)$ is infinite and define
	$D_n = \{a \in X^2 : f(r(a)) = n\}$
	for all $n < \omega$. It is clear that the sets $D_n$ are dense and pairwise disjoint.
\end{proof}

\begin{corollary}
	The product of two or more Hausdorff separable crowded spaces is $\omega$-resolvable.
\end{corollary}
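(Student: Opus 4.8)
The plan is to deduce this from Theorem~\ref{TCountable} by descending to countable dense subspaces and then lifting the conclusion back up via the transfer principle of \cite[Theorem 4]{Ceder1964}, already quoted in the introduction. Concretely, given finitely many separable crowded Hausdorff spaces $X_1, \ldots, X_n$ (with $n \geq 2$), I would fix a countable dense subset $D_i \subseteq X_i$ for each $i$ and set $D = D_1 \times \cdots \times D_n$. This $D$ is a countable dense subspace of $X_1 \times \cdots \times X_n$, and the goal is to show that $D$ is $\omega$-resolvable and that $\omega$-resolvability of this dense piece forces $\omega$-resolvability of the whole product.

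For the first half I would argue that each $D_i$ is itself crowded Hausdorff: a dense subset of a crowded $T_1$-space has no isolated points, since if $U$ were open in $X_i$ with $U \cap D_i = \{d\}$, then $U \setminus \{d\}$ would be open and disjoint from the dense set $D_i$, forcing $U = \{d\}$ and contradicting crowdedness of $X_i$. Hence each $D_i$ is a countable crowded Hausdorff space. To reach the case $n \geq 2$ from the two-factor Theorem~\ref{TCountable}, I would group the factors, writing $D = (D_1 \times \cdots \times D_{n-1}) \times D_n$; since a finite product of countable crowded Hausdorff spaces is again countable, crowded and Hausdorff, both grouped factors qualify, and Theorem~\ref{TCountable} gives that $D$ is maximally resolvable. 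As $D$ is a countable crowded $T_1$-space, every nonempty open subset of it is infinite, so $\Delta(D) = \omega$ and maximal resolvability of $D$ is exactly $\omega$-resolvability.

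For the second half I would use that $D$ is dense together with the elementary fact that a nonempty open subspace of an $\omega$-resolvable space is $\omega$-resolvable (intersecting a fixed dense partition with the open set preserves density of each piece). Thus for an arbitrary nonempty open $W \subseteq X_1 \times \cdots \times X_n$, the set $W \cap D$ is a nonempty open subspace of $D$, hence an $\omega$-resolvable subspace sitting inside $W$; here I would also note that the product topology on $D$ agrees with the subspace topology it inherits from the big product, so no ambiguity arises. Since every nonempty open set of the product contains a nonempty $\omega$-resolvable subspace, \cite[Theorem 4]{Ceder1964} yields that $X_1 \times \cdots \times X_n$ is $\omega$-resolvable.

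The step I expect to be the only real point requiring care is the passage from ``separable'' to ``countable crowded'': everything hinges on the countable dense subsets $D_i$ remaining \emph{crowded}, which is precisely where the Hausdorff (in fact $T_1$) hypothesis is consumed, and on the grouping trick that lets the two-factor Theorem~\ref{TCountable} cover arbitrary finite products. Once crowdedness is secured, the remaining transfers (finite products of countable crowded spaces, open subspaces of resolvable spaces, and Ceder's density criterion) are routine.
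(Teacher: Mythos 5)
Your argument is correct for finitely many factors and is essentially the deduction the paper intends: the corollary is stated without proof immediately after Theorem \ref{TCountable}, and the expected route is exactly yours --- pass to countable dense subsets (which stay crowded by the $T_1$ argument you give), group the factors so that Theorem \ref{TCountable} applies, and transfer resolvability back through density. One stylistic remark: the detour through \cite[Theorem 4]{Ceder1964} is not needed. Once $D = D_1 \times \cdots \times D_n$ is a dense $\omega$-resolvable subspace of the product $Z$, a partition $D = \bigsqcup_{k < \omega} E_k$ into sets dense in $D$ already consists of sets dense in $Z$ (density is transitive), and attaching $Z \setminus D$ to one piece partitions $Z$ into $\omega$-many dense sets. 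Your use of Ceder's criterion is equally valid, just a heavier tool than necessary.

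The one substantive omission is that the statement says ``two or more'' spaces, which includes infinite products, while your proof explicitly treats only finite $n$; for infinitely many factors the set $D = \prod_i D_i$ is uncountable, so Theorem \ref{TCountable} cannot be applied to it directly and your grouping trick no longer produces two countable factors. The fix is one line: write $\prod_{i \in I} X_i \cong \left( X_{i_1} \times X_{i_2} \right) \times \prod_{i \in I \setminus \{i_1, i_2\}} X_i$, note that $X_{i_1} \times X_{i_2}$ is $\omega$-resolvable by your two-factor case, and observe that a product with an $\omega$-resolvable factor is $\omega$-resolvable, since $E \times \prod_{i \in I \setminus \{i_1, i_2\}} X_i$ is dense in the whole product whenever $E$ is dense in $X_{i_1} \times X_{i_2}$. (Alternatively, the infinite case already follows from Elkin's theorem \cite{Elkin1969} quoted in the introduction, which gives $\frak{c}$-resolvability of any infinite product of non-single-point spaces.)
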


\section{Thick subset approach. Main notions}\label{SMain}

Sections \ref{SMain}--\ref{SNeg} are dedicated to infinitary combinatorics.
Corollaries of these results in terms of topology and resolvability are presented in Section \ref{SCorollaries}.

For every set $A$ and cardinal $\kappa$ let us denote $[A]^\kappa = \{M \subseteq A : |M| = \kappa\}$.

Everywhere below, the letters $\kappa, \lambda, \mu, \nu, \tau$ represent cardinals, whereas the letters $\alpha, \beta, \gamma, \delta, \xi$ represent ordinals.

\begin{definition}
	Suppose $A, B$ are sets and $\mu, \nu$ are cardinals.
	We say that a set $E \subseteq A \times B$ is $(\mu,\nu)$-{\it thick}, if for any $M \in [A]^\mu$ and $N \in [B]^\nu$ the intersection $E \cap (M \times N)$ is not empty.
	
	We also say that a set $E \subseteq A \times B$ is {\it thick}, if it is $(|A|, |B|)$-thick.
\end{definition}

We do not exclude the cases $\mu > |A|$ and $\nu > |B|$, although the empty set in these cases turns out to be $(\mu, \nu)$-thick.

Clearly, if $X$ and $Y$ are isodyne spaces, then every thick subset of $X \times Y$ is dense.
Hence, our goal is to partition $X \times Y$ into big number of thick subsets.
This is a natural antipode of Ramsey relation.

One may notice that in the proof of Theorem \ref{T1} we, in fact, show that $A^2$ can be partitioned into $2$ thick subsets for every infinite set $A$.

It is easy to see that the thickness is a monotone property in the following sense.

\begin{proposition}\label{PMono}
	If $E$ is a $(\mu, \nu)$-thick subset of $A \times B$, then for every $E' \supseteq E$, $A' \subseteq A$, $B' \subseteq B$, $\mu' \geq \mu$ and $\nu' \geq \nu$ the set $E' \cap (A' \times B')$ is $(\mu', \nu')$-thick in $A' \times B'$.
\end{proposition}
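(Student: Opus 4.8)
The plan is to unwind the definition of thickness directly and use a shrinking argument on the test sets. To show that $E' \cap (A' \times B')$ is $(\mu', \nu')$-thick in $A' \times B'$, I fix arbitrary $M \in [A']^{\mu'}$ and $N \in [B']^{\nu'}$ and aim to produce a point in $\bigl(E' \cap (A' \times B')\bigr) \cap (M \times N)$. If no such $M$ exists (for instance when $\mu' > |A'|$) or no such $N$ exists, then the condition to be verified is vacuously satisfied, so I may assume both index sets $[A']^{\mu'}$ and $[B']^{\nu'}$ are nonempty.

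Since $|M| = \mu' \geq \mu$ and $|N| = \nu' \geq \nu$, I may choose subsets $M_0 \subseteq M$ and $N_0 \subseteq N$ with $|M_0| = \mu$ and $|N_0| = \nu$. Because $M_0 \subseteq M \subseteq A' \subseteq A$ and $N_0 \subseteq N \subseteq B' \subseteq B$, these shrunken sets satisfy $M_0 \in [A]^\mu$ and $N_0 \in [B]^\nu$. Thus the $(\mu, \nu)$-thickness of $E$ in $A \times B$ applies and yields a point $(a, b) \in E \cap (M_0 \times N_0)$.

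It then remains to trace this single witness point back up through the inclusions. From $E \subseteq E'$ I get $(a,b) \in E'$; from $a \in M_0 \subseteq A'$ and $b \in N_0 \subseteq B'$ I get $(a,b) \in A' \times B'$; and from $a \in M_0 \subseteq M$ together with $b \in N_0 \subseteq N$ I get $(a,b) \in M \times N$. Hence $(a,b) \in \bigl(E' \cap (A' \times B')\bigr) \cap (M \times N)$, so this intersection is nonempty, which is exactly what is required.

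There is no real obstacle here: the statement is pure monotonicity, and the only point demanding care is the bookkeeping of the nested inclusions $M_0 \subseteq M \subseteq A' \subseteq A$ and $N_0 \subseteq N \subseteq B' \subseteq B$, combined with the stated convention that thickness holds vacuously once the relevant index sets $[\cdot]^{\mu'}$ or $[\cdot]^{\nu'}$ are empty. I would present the proof in exactly the three moves above — shrink the test sets to cardinalities $\mu$ and $\nu$, invoke the hypothesis on $E$, and verify that the resulting point survives all the inclusions.
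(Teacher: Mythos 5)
Your proof is correct, and it is exactly the routine verification that the paper omits: the paper states Proposition \ref{PMono} without proof, remarking only that thickness is ``easy to see'' to be monotone. Your three steps --- shrinking the test sets $M, N$ to subsets of cardinalities $\mu, \nu$, invoking the $(\mu,\nu)$-thickness of $E$ in $A \times B$, and tracing the witness point through the inclusions (together with the paper's convention that thickness holds vacuously when $[A']^{\mu'}$ or $[B']^{\nu'}$ is empty) --- constitute the intended argument.
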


We use the symbol $\bigsqcup$ to denote the {\it disjoint union}, i.e. the union of pairwise disjoint sets.

\begin{proposition}\label{PCof}
	If $\kappa \times \mathrm{cf}(\lambda)$ or $\mathrm{cf}(\kappa) \times \lambda$ can be partitioned into $\tau$-many thick subsets, then so is $\kappa \times \lambda$.
\end{proposition}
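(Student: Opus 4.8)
The plan is to prove Proposition~\ref{PCof} by transferring a thick decomposition from the ``reduced'' product to the full one, using cofinal maps. I will only treat the case $\kappa \times \mathrm{cf}(\lambda)$, since $\mathrm{cf}(\kappa) \times \lambda$ is symmetric. Write $\lambda' = \mathrm{cf}(\lambda)$ and fix a strictly increasing cofinal map $\pi : \lambda' \to \lambda$, so that $\sup_{\beta < \lambda'} \pi(\beta) = \lambda$. The key observation is that a thick subset of $\kappa \times \lambda'$ should pull back, along the product map $\mathrm{id} \times \pi$, to a thick subset of $\kappa \times \mathrm{ran}(\pi)$, and then I must arrange that the induced decomposition of $\kappa \times \mathrm{ran}(\pi)$ extends to all of $\kappa \times \lambda$ without destroying thickness.

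The core difficulty is that thickness in $\kappa \times \lambda$ is tested against sets $N \in [\lambda]^\lambda$, whereas a preimage construction only controls columns indexed by $\mathrm{ran}(\pi)$, which has size only $\lambda' = \mathrm{cf}(\lambda) \le \lambda$ and is generally much smaller than $\lambda$. So the naive pullback lands in the wrong ``column-width'' regime and need not be thick. The way around this is to partition the full index set $\lambda$ into $\lambda'$-many blocks, each attached to a point of $\mathrm{ran}(\pi)$, and push the thickness through block-by-block. Concretely, first I would choose a surjection $\rho : \lambda \to \lambda'$ whose fibers $\rho^{-1}(\beta)$ form a partition of $\lambda$ into $\lambda'$ pieces, arranged so that every $N \in [\lambda]^\lambda$ meets cofinally many fibers; since $\mathrm{cf}(\lambda) = \lambda'$, any set of full size $\lambda$ must have its $\rho$-image cofinal (indeed of size $\lambda'$) in $\lambda'$, which is the crucial cofinality fact I would extract as a preliminary lemma.

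Now suppose $\{T_\xi : \xi < \tau\}$ is a partition of $\kappa \times \lambda'$ into thick subsets. I would define $\tilde T_\xi = \{(\gamma, \eta) \in \kappa \times \lambda : (\gamma, \rho(\eta)) \in T_\xi\}$, i.e. pull back the decomposition along $\mathrm{id} \times \rho$. These sets are clearly pairwise disjoint and cover $\kappa \times \lambda$, so it remains only to verify each $\tilde T_\xi$ is thick in $\kappa \times \lambda$. Take $M \in [\kappa]^\kappa$ and $N \in [\lambda]^\lambda$; by the cofinality fact above, $\rho[N] \in [\lambda']^{\lambda'}$, and since $T_\xi$ is thick in $\kappa \times \lambda'$ there is a pair $(\gamma, \beta) \in T_\xi \cap (M \times \rho[N])$. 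Choosing any $\eta \in N$ with $\rho(\eta) = \beta$ yields $(\gamma, \eta) \in \tilde T_\xi \cap (M \times N)$, establishing thickness. The main obstacle I anticipate is purely at the level of the preliminary cofinality lemma: I must confirm that a full-size subset of $\lambda$ really does surject onto a full-size (equivalently, cofinal) subset of $\lambda'$ under a suitably chosen $\rho$, and that such $\rho$ exists with all fibers available; once that combinatorial skeleton is in place, the thickness transfer is a one-line check via Proposition~\ref{PMono}-style monotonicity reasoning.
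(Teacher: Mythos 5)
Your overall route is the same as the paper's: partition $\lambda$ into $\mathrm{cf}(\lambda)$-many blocks, pull the given decomposition of $\kappa \times \mathrm{cf}(\lambda)$ back along the block map $\mathrm{id} \times \rho$, and transfer thickness by noting that every full-size $N \subseteq \lambda$ has full-size $\rho$-image. Indeed, the paper's sets $E_\gamma = \bigsqcup_{(\alpha,\beta) \in H_\gamma} \{\alpha\} \times B_\beta$ are exactly your $\tilde T_\xi$ for the block partition $\lambda = \bigsqcup_{\beta < \mathrm{cf}(\lambda)} B_\beta$.

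However, there is a genuine gap at precisely the point you flagged as a ``preliminary cofinality lemma'' and then left unproved: the assertion that, because $\mathrm{cf}(\lambda) = \lambda'$, \emph{any} set of size $\lambda$ has $\rho$-image of size $\lambda'$ is false for an arbitrary surjection $\rho : \lambda \to \lambda'$. In the only nontrivial case $\lambda$ is singular (if $\lambda$ is regular, then $\mathrm{cf}(\lambda) = \lambda$ and there is nothing to prove), and a surjection onto $\mathrm{cf}(\lambda)$ may well have a single fiber of full size $\lambda$; that fiber is then a set $N \in [\lambda]^\lambda$ with $|\rho[N]| = 1$, so your $\tilde T_\xi$ need not be thick. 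The missing idea is the one extra requirement the paper builds into its partition: every fiber must satisfy $|\rho^{-1}(\beta)| < \lambda$. Such a partition exists by the very definition of cofinality, $\lambda = \bigsqcup_{\beta < \mathrm{cf}(\lambda)} B_\beta$ with each $|B_\beta| < \lambda$. With small fibers your lemma becomes true: if $N \in [\lambda]^\lambda$ had $|\rho[N]| < \mathrm{cf}(\lambda)$, then $N \subseteq \bigcup_{\beta \in \rho[N]} \rho^{-1}(\beta)$ would be a union of fewer than $\mathrm{cf}(\lambda)$ sets, each of size less than $\lambda$, hence $|N| < \lambda$, a contradiction; since $\rho[N] \subseteq \lambda'$, this forces $\rho[N] \in [\lambda']^{\lambda'}$. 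Once this condition on $\rho$ is added, your transfer argument is complete and coincides with the paper's proof.
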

\begin{proof}
	Without loss of generality we may assume the first case.	
	Choose any partition $\lambda = \bigsqcup_{\beta < \mathrm{cf}(\lambda)} B_\beta$ such that $|B_\beta| < \lambda$.
	Take any partition $\kappa \times \mathrm{cf}(\lambda) = \bigsqcup_{\gamma < \tau} H_\gamma$ into thick subsets.
	For all $\gamma < \tau$ define $E_\gamma = \bigsqcup_{(\alpha,\beta) \in H_\gamma} \{\alpha\} \times B_\beta$.
	It is obvious that the sets $E_\gamma$ are as required.
\end{proof}

If $f : X \to Y$ and $A \subseteq X$, then we denote $f[A] = \{f(x) : x \in A\}$.

\begin{definition}
	Let us say that a family $\mathcal{E}$ of subsets of a set $X$ is $\tau$-{\it breakable}, if there is a function $f : X \to \tau$ such that $f[A] = \tau$ whenever $A \in \mathcal{E}$.
\end{definition}

We are ready to formulate and prove

\begin{corelemma}\label{LCore}
	Suppose $\mathrm{cf}\left( \kappa^\mu \right) = \lambda$ and every subfamily of $[\kappa]^\mu$ of cardinality less than $\kappa^\mu$ is $\tau$-breakable. Then $\kappa \times \lambda$ can be partitioned into $\tau$-many $(\mu,\lambda)$-thick subsets.
	
	If, moreover, $\tau^+ = \lambda$, then $\kappa \times \lambda$ can be partitioned into $\lambda$-many $(\mu,\lambda)$-thick subsets.
\end{corelemma}
\begin{proof}
	Let us enumerate $[\kappa]^\mu = \{A_\gamma : \gamma < \kappa^\mu\}$.
	Choose any increasing sequence $(\delta_\beta)_{\beta < \lambda}$, whose limit is $\kappa^\mu$.
	
	\medskip
	
	\underline{Partition into $\tau$-many subsets.}
	
	For every $\beta < \lambda$ take a function $f_\beta : \kappa \to \tau$ such that $f_\beta[A_\gamma] = \tau$ whenever $\gamma < \delta_\beta$.
	For all $\xi < \tau$ define $E_\xi = \{(\alpha, \beta) \in \kappa \times \lambda : f_\beta(\alpha) = \xi\}$.
	Let us show that the sets $E_\xi$ are as required.
	First, they are pairwise disjoint.
	Now suppose $M = A_\gamma \in [\kappa]^\mu$ and $N \in [\lambda]^\lambda$.
	Take $\beta \in N$ such that $\delta_\beta > \gamma$.
	We have $f_\beta[A_\gamma] = \tau$, so the set $A_\gamma \times \{\beta\} \subseteq M \times N$ has nonempty intersection with every $E_\xi$.
	
	\medskip
	
	\underline{Partition into $\lambda$-many subsets for $\tau^+ = \lambda$.}
	
	For every $\beta < \lambda$ take a function $f_\beta : \kappa \to \lambda$ such that $f_\beta[A_\gamma] \supseteq \beta$ whenever $\gamma < \delta_\beta$.
	For all $\xi < \lambda$ define $E_\xi = \{(\alpha, \beta) \in \kappa \times \lambda : f_\beta(\alpha) = \xi\}$.
	The sets $E_\xi$ are pairwise disjoint.
	Let us show that every $E_\xi$ is $(\mu, \lambda)$-thick.
	Suppose $M = A_\gamma \in [\kappa]^\mu$ and $N \in [\lambda]^\lambda$.
	Take $\beta \in N$ such that $\beta > \xi$ and $\delta_\beta > \gamma$.
	We have $\xi \in f_\beta[A_\gamma]$, so the set $A_\gamma \times \{\beta\} \subseteq M \times N$ has nonempty intersection with $E_\xi$.
\end{proof}

Now we only have to find some cardinals satisfying the conditions of Core Lemma \ref{LCore}.

\section{Thick subset approach. Positive results}

\subsection{Corollaries of Kuratowski Lemma}

The following statement sometimes is referred as Kuratowski Lemma.

\begin{lemma}\label{LKur}
Suppose $\mathcal{E}$ is a family of sets, $\kappa$ is an infinite cardinal, $|\mathcal{E}| \leq \kappa$ and $|A| \geq \kappa$ whenever $A \in \mathcal{E}$.
Then $\mathcal{E}$ is $\kappa$-breakable.
\end{lemma}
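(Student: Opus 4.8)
The plan is to build the required function $f$ greedily by a transfinite recursion of length $\kappa$, where each step discharges a single obligation of the form ``the set $A$ must receive the color $\xi$''. Writing $X = \bigcup \mathcal{E}$, I would first enumerate $\mathcal{E} = \{A_\alpha : \alpha < |\mathcal{E}|\}$ and then list all pairs $(\alpha, \xi)$ with $\alpha < |\mathcal{E}|$ and $\xi < \kappa$ as a single sequence $(\alpha_\beta, \xi_\beta)_{\beta < \kappa}$ (repeating entries if necessary to make the length exactly $\kappa$); this is possible because $|\mathcal{E}| \cdot \kappa = \kappa$ for the infinite cardinal $\kappa$. The whole argument rests on this piece of cardinal arithmetic, which guarantees that the list of obligations has length at most $\kappa$.

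I would then define a partial function $f$ on $X$ recursively. At stage $\beta < \kappa$, suppose $f$ has already been defined on a set of points of size at most $|\beta|$. Since $|\beta| < \kappa \leq |A_{\alpha_\beta}|$, there is a point $x_\beta \in A_{\alpha_\beta}$ on which $f$ is not yet defined; I set $f(x_\beta) = \xi_\beta$. After all $\kappa$ stages, $f$ is defined on a subset of $X$ of size at most $\kappa$, and I extend it to all of $X$ by assigning, say, the color $0$ to every remaining point.

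It remains to check that $f[A_\alpha] = \kappa$ for every $\alpha$. Fix $\alpha$ and a color $\xi < \kappa$. The pair $(\alpha, \xi)$ appears as $(\alpha_\beta, \xi_\beta)$ for some $\beta < \kappa$, and at that stage we placed a point of $A_\alpha$ into the fiber $f^{-1}(\xi)$; hence $\xi \in f[A_\alpha]$. As $\xi$ was arbitrary, $f[A_\alpha] = \kappa$, so $\mathcal{E}$ is $\kappa$-breakable.

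The only point requiring care, and the step I would flag as the crux, is the availability of a fresh uncolored point at every stage: one must confirm that the number of points colored before stage $\beta$ stays strictly below $|A_{\alpha_\beta}|$. This is immediate once the obligations are arranged in a sequence of length $\kappa$, since at stage $\beta$ at most $|\beta| < \kappa$ points have been used. Note that no regularity or other assumption on $\kappa$ is needed, only that $\kappa$ is infinite.
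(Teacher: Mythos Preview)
Your proof is correct and follows essentially the same greedy transfinite construction as the paper: both arguments choose, stage by stage, fresh points from the sets $A_\alpha$ and assign them the required colors, relying on the fact that fewer than $\kappa$ points have been used at any stage while each $A_\alpha$ has size at least $\kappa$. The only difference is organizational---you linearize all obligations $(\alpha,\xi)$ into a single $\kappa$-sequence via $|\mathcal{E}|\cdot\kappa=\kappa$ and handle one per step, whereas the paper processes at stage $\beta$ all pairs with $\max(\alpha,\beta')=\beta$ simultaneously---but the underlying idea is identical.
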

\begin{proof}
	Let us index $\mathcal{E} = \{A_\alpha : \alpha < \kappa\}$.
	Recursively on $\beta$, choose pairwise distinct points $x_\alpha^\beta \in A_\alpha$ and $x_\beta^\alpha \in A_\beta$ for all $\alpha \leq \beta < \kappa$.
	Finally, denote $f(x_\alpha^\beta) = \beta$ for all $\alpha, \beta < \kappa$.
	It does not matter how to define $f$ in all other points.
	We have $f[A] = \kappa$ for all $A \in \mathcal{E}$.
\end{proof}

Kuratowski Lemma \ref{LKur} and Core Lemma \ref{LCore} imply

\begin{proposition}\label{PThick1}
	If $2^\kappa = \kappa^+ = \lambda$, then $\lambda \times \lambda$ can be partitioned into $\lambda$-many $(\kappa, \lambda)$-thick subsets.
\end{proposition}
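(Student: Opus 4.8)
The plan is to feed the arithmetic $2^\kappa = \kappa^+ = \lambda$ directly into Core Lemma \ref{LCore}, using Kuratowski Lemma \ref{LKur} to verify its breakability hypothesis. Concretely, I would apply Core Lemma \ref{LCore} under the substitution in which its first factor is taken to be $\lambda$, its exponent is taken to be $\kappa$, its cofinality parameter is taken to be $\lambda$, and its breaking number is taken to be $\kappa$. With these choices the \emph{stronger} (``moreover'') conclusion of that lemma, which divides the first-factor-by-cofinality product into as many $(\text{exponent},\text{cofinality})$-thick pieces as the successor of the breaking number, becomes precisely a partition of $\lambda \times \lambda$ into $\lambda$-many $(\kappa,\lambda)$-thick subsets. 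So the whole proof reduces to checking the two hypotheses of Core Lemma \ref{LCore} and the side condition of its moreover clause.

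First I would verify the cofinality hypothesis $\mathrm{cf}\!\left(\lambda^\kappa\right) = \lambda$. Using $\lambda = 2^\kappa$ one computes $\lambda^\kappa = \left(2^\kappa\right)^\kappa = 2^{\kappa\cdot\kappa} = 2^\kappa = \lambda$, and since $\lambda = \kappa^+$ is a successor cardinal it is regular, so $\mathrm{cf}\!\left(\lambda^\kappa\right) = \mathrm{cf}(\lambda) = \lambda$, as required. Next I would check the breakability hypothesis: that every subfamily of $[\lambda]^\kappa$ of cardinality less than $\lambda^\kappa = \lambda$ is $\kappa$-breakable. Because $\lambda = \kappa^+$, any such subfamily $\mathcal{E}$ has $|\mathcal{E}| \leq \kappa$, while every member $A \in \mathcal{E}$ satisfies $|A| = \kappa \geq \kappa$; hence Kuratowski Lemma \ref{LKur}, applied with its infinite cardinal equal to $\kappa$, gives that $\mathcal{E}$ is $\kappa$-breakable.

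Finally, since the chosen breaking number $\kappa$ satisfies $\kappa^+ = \lambda$, the side condition of the moreover clause of Core Lemma \ref{LCore} holds, and that clause delivers the desired partition of $\lambda \times \lambda$ into $\lambda$-many $(\kappa,\lambda)$-thick subsets.

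I expect there to be no genuine obstacle here: the two lemmas dovetail, and the argument is essentially a bookkeeping exercise in matching the four parameters of Core Lemma \ref{LCore} to the data $2^\kappa = \kappa^+ = \lambda$. The one point that requires care is the choice of breaking number: it must be taken to be $\kappa$ rather than $\lambda$, precisely so that Kuratowski Lemma \ref{LKur} applies (families of size $\leq\kappa$ of $\kappa$-sized sets are $\kappa$-breakable) and simultaneously the hypothesis $\kappa^+ = \lambda$ of the stronger conclusion is met. The only nontrivial computation is $\lambda^\kappa = \lambda$, which is immediate from $\lambda = 2^\kappa$.
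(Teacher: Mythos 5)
Your proof is correct and is essentially identical to the paper's own argument: the paper likewise applies Core Lemma \ref{LCore} with first factor and cofinality parameter $\lambda$, exponent and breaking number $\kappa$, noting $\lambda^\kappa = \lambda$ and invoking Kuratowski Lemma \ref{LKur} for the breakability hypothesis and the ``moreover'' clause via $\kappa^+ = \lambda$. Your write-up simply spells out the parameter checks that the paper leaves implicit.
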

\begin{proof}
	Note that $\lambda^\kappa = \lambda$.
	Applying Core Lemma \ref{LCore} to $\overline{\kappa} = \overline{\lambda} = \lambda$ and $\overline{\mu} = \overline{\tau} = \kappa$,
	we obtain the required statement.
\end{proof}

Applying Proposition \ref{PMono}, we obtain two corollaries of Proposition \ref{PThick1}.

\begin{corollaries}\label{C1} If $2^\kappa = \kappa^+ = \lambda$, then:
	\begin{enumerate}
		\item[(1)] $\lambda \times \lambda$ can be partitioned into $\lambda$-many thick subsets;
		
		\item[(2)] $\kappa \times \lambda$ can be partitioned into $\lambda$-many thick subsets.
	\end{enumerate}
\end{corollaries}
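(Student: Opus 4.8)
The plan is to derive both items directly from Proposition \ref{PThick1} together with the monotonicity Proposition \ref{PMono}, observing that in each case the relevant notion of \emph{thick} unfolds into a $(\mu,\nu)$-thickness condition that is already implied by the $(\kappa,\lambda)$-thickness produced by Proposition \ref{PThick1}. So I would start by fixing, via Proposition \ref{PThick1}, a division $\lambda \times \lambda = \bigsqcup_{\xi < \lambda} E_\xi$ into $\lambda$-many $(\kappa,\lambda)$-thick subsets, and then adapt this single partition to serve both purposes.

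For item (1), note that on $\lambda \times \lambda$ the word \emph{thick} means $(\lambda,\lambda)$-thick. Since $2^\kappa = \kappa^+ = \lambda$ forces $\kappa < \lambda$, hence $\kappa \le \lambda$, I would apply Proposition \ref{PMono} with $A' = B' = \lambda$, $E' = E_\xi$, $\mu' = \lambda \ge \kappa = \mu$ and $\nu' = \lambda = \nu$ to conclude that every $(\kappa,\lambda)$-thick set $E_\xi$ is automatically $(\lambda,\lambda)$-thick, i.e. thick. Thus the very same partition $\bigsqcup_{\xi<\lambda} E_\xi$ witnesses (1).

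For item (2), where on $\kappa \times \lambda$ the word \emph{thick} means $(\kappa,\lambda)$-thick, I would instead restrict the partition to the subproduct $\kappa \times \lambda$. As $\kappa \subseteq \lambda$, taking $A' = \kappa$, $B' = \lambda$, $\mu' = \kappa$ and $\nu' = \lambda$ in Proposition \ref{PMono} shows that each trace $E_\xi \cap (\kappa \times \lambda)$ is $(\kappa,\lambda)$-thick in $\kappa \times \lambda$, which is exactly thickness there. Since the $E_\xi$ partition the larger product $\lambda \times \lambda$, their traces $E_\xi \cap (\kappa \times \lambda)$ partition $\kappa \times \lambda$, giving the desired division into $\lambda$-many thick subsets.

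There is essentially no hard step here: both claims are immediate consequences of monotonicity. The only point demanding attention is the bookkeeping of parameters, namely verifying $\kappa \le \lambda$ so that the first coordinate may be raised to $\lambda$ in (1), and viewing $\kappa$ as a subset of $\lambda$ so that the partition may be restricted in (2); both are guaranteed by $\kappa < \lambda = \kappa^+$.
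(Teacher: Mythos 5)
Your proof is correct and follows exactly the route the paper intends: the paper derives Corollaries \ref{C1} by applying the monotonicity Proposition \ref{PMono} to the partition of $\lambda \times \lambda$ into $\lambda$-many $(\kappa,\lambda)$-thick sets given by Proposition \ref{PThick1}, which is precisely what you do (raising $\mu$ to $\lambda$ for item (1), and restricting to the subproduct $\kappa \times \lambda$ for item (2)). Your write-up merely makes explicit the parameter bookkeeping that the paper leaves to the reader.
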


Proposition \ref{PCof} allows us to deduce from Corollaries \ref{C1} the following

\begin{theorem}\label{TKur}
	If $2^\kappa = \kappa^+$, $\{|A|, \mathrm{cf}|A|\} \cap \{\kappa, \kappa^+\} \ne \emptyset$ and $\mathrm{cf}|B| = \kappa^+$, then the product $A \times B$ can be partitioned into $\kappa^+$-many thick subsets.
\end{theorem}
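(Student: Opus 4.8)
The plan is to assemble the statement entirely from the three results already in hand—Corollaries \ref{C1}, Proposition \ref{PCof}, and the monotonicity of Proposition \ref{PMono}—by a short case analysis, since no combinatorial work remains to be done. Write $\lambda = \kappa^+$, so that $2^\kappa = \lambda$. Because $(\mu,\nu)$-thickness depends only on the cardinalities of the two factors, I may replace $A$ and $B$ by the cardinals $a = |A|$ and $b = |B|$ and prove instead that $a \times b$ can be partitioned into $\lambda$-many $(a,b)$-thick sets.

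First I would peel off the second factor. Since $\mathrm{cf}(b) = \lambda$, Proposition \ref{PCof}, used in its first alternative with the pair $(a,b)$, reduces the task to dividing $a \times \mathrm{cf}(b) = a \times \lambda$ into $\lambda$-many thick subsets: any partition of $a \times \lambda$ into $(a,\lambda)$-thick sets is transformed by that proposition into a partition of $a \times b$ into $(a,b)$-thick sets. Thus it suffices to handle $a \times \lambda$.

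It then remains to use the hypothesis $\{a, \mathrm{cf}(a)\} \cap \{\kappa, \lambda\} \ne \emptyset$, which I would split into two kinds of cases. If $a = \lambda$ or $a = \kappa$, then $a \times \lambda$ is literally $\lambda \times \lambda$ or $\kappa \times \lambda$, and Corollaries \ref{C1}(1) or (2), respectively, supply the desired $\lambda$-many thick subsets at once. If instead $\mathrm{cf}(a) = \lambda$ or $\mathrm{cf}(a) = \kappa$ with $a$ itself larger, I would apply Proposition \ref{PCof} a second time, now in its second alternative with the pair $(a,\lambda)$: a partition of $\mathrm{cf}(a) \times \lambda$ into thick subsets yields one of $a \times \lambda$. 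Here $\mathrm{cf}(a) \times \lambda$ is again $\lambda \times \lambda$ or $\kappa \times \lambda$, so Corollaries \ref{C1} finishes the job. Since these four possibilities for $a$ exhaust the hypothesis, every case is covered.

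The only real care is bookkeeping: ensuring that each invocation of Proposition \ref{PCof} is oriented correctly—peeling the appropriate factor and matching its cofinality to $\lambda$—and confirming that the thickness parameters delivered by Corollaries \ref{C1}, namely $(\kappa,\lambda)$- or $(\lambda,\lambda)$-thickness, propagate through Proposition \ref{PCof} and the monotonicity of Proposition \ref{PMono} up to the full $(a,b)$-thickness demanded in the conclusion. I expect no genuinely new obstacle to surface at this stage; the substance of the argument already resides in the Core Lemma and in the Kuratowski Lemma feeding Corollaries \ref{C1}.
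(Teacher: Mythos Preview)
Your proposal is correct and follows exactly the route the paper intends: the paper merely asserts that Theorem \ref{TKur} follows from Corollaries \ref{C1} via Proposition \ref{PCof}, and you have simply spelled out the implicit case analysis. No substantive difference from the paper's approach.
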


\subsection{Corollaries of MA. Unsplitting number}

Let us recall the following notions from \cite[Definitions 3.1 and 3.6]{Blass2010}.

\begin{definition}
	A set $S \subseteq \omega$ {\it splits} a set $A \subseteq \omega$, if both $A \cap S$ and $A \setminus S$ are infinite.
	A family $\mathcal{R} \subseteq [\omega]^\omega$ is {\it unsplittable}, if no single set splits all members of $\mathcal{R}$.
	The {\it unsplitting number} $\frak{r}$ is the smallest cardinality of any unsplittable family.
\end{definition}

Martin's axiom implies $\frak{r} = \frak{c}$ \cite{Blass2010}.

\begin{proposition}\label{Plessr}
	If $\mathcal{H} \subseteq [\omega]^\omega$ and $|\mathcal{H}| < \frak{r}$, then $\mathcal{H}$ is $\omega$-breakable.
\end{proposition}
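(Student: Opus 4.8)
The plan is to reduce $\omega$-breakability to a repeated application of the definition of $\frak{r}$. Recall that $\mathcal{H}$ is $\omega$-breakable precisely when there is a function $f : \omega \to \omega$ with $f[H] = \omega$ for every $H \in \mathcal{H}$; equivalently, it suffices to produce pairwise disjoint sets $P_0, P_1, \ldots \subseteq \omega$ such that $H \cap P_n \neq \emptyset$ for every $H \in \mathcal{H}$ and every $n < \omega$, since then the map sending $P_n$ to $n$ (and leftover points to $0$) is as required. So my goal is to manufacture such a countable family of pieces, and the only nontrivial ingredient is the hypothesis $|\mathcal{H}| < \frak{r}$.

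First I would record the single-set consequence of the hypothesis: if $\mathcal{F} \subseteq [\omega]^\omega$ and $|\mathcal{F}| < \frak{r}$, then $\mathcal{F}$ is not unsplittable (it is too small), so there exists one set $S \subseteq \omega$ splitting every member of $\mathcal{F}$, i.e. $A \cap S$ and $A \setminus S$ are both infinite for all $A \in \mathcal{F}$. Taken in isolation this yields only $2$-breakability, and bridging the gap between a single split and $\omega$ pieces is the main obstacle.

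The idea to overcome it is to peel off splitting sets $\omega$ times. I would build recursively a decreasing chain $\omega = R_0 \supseteq R_1 \supseteq \cdots$ together with the pieces $P_n$, maintaining the invariant that $H \cap R_n$ is infinite for every $H \in \mathcal{H}$. At stage $n$, the family $\{H \cap R_n : H \in \mathcal{H}\}$ lies in $[\omega]^\omega$ by the invariant and has cardinality at most $|\mathcal{H}| < \frak{r}$, so the previous paragraph supplies a single set $S_n$ splitting all of its members. I then set $P_n = R_n \cap S_n$ and $R_{n+1} = R_n \setminus S_n$. Since $S_n$ splits $H \cap R_n$, both $H \cap P_n = (H \cap R_n) \cap S_n$ and $H \cap R_{n+1} = (H \cap R_n) \setminus S_n$ are infinite; thus the invariant is preserved for the next stage and, at the same time, every freshly constructed piece $P_n$ meets each $H$ (in fact infinitely).

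Finally I would define $f : \omega \to \omega$ by $f(x) = n$ for $x \in P_n$ and $f(x) = 0$ on the leftover points belonging to no $P_n$. The $P_n$ are pairwise disjoint by construction, and $H \cap P_n \neq \emptyset$ for all $n$, so $f[H] = \omega$ for every $H \in \mathcal{H}$, exhibiting $\mathcal{H}$ as $\omega$-breakable. I expect no delicate point beyond the bookkeeping of the invariant ``$H \cap R_n$ is infinite for all $H$'', which is exactly what both licenses the next invocation of $\frak{r}$ and guarantees the pieces remain nonempty on every member of $\mathcal{H}$.
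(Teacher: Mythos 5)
Your proof is correct and follows essentially the same argument as the paper: recursively peel off splitting sets (available at each stage because the restricted family has cardinality $< \frak{r}$ and remains infinite by the invariant), obtaining countably many pairwise disjoint pieces each meeting every member of $\mathcal{H}$, and define $f$ from these pieces. The only cosmetic difference is that the paper recurses into the splitting set and uses the complements as pieces, while you recurse into the complement and use the intersections, which is an entirely symmetric choice.
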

\begin{proof}
	Denote $M_0 = \omega$ and suppose that for some $n < \omega$ we have a set $M_n \subseteq \omega$ such that $|M_n \cap A| = \omega$ whenever $A \in \mathcal{H}$.
	Denote $\mathcal{H}_n = \{A \cap M_n : A \in \mathcal{H}\}$.
	Since $|\mathcal{H}_n| < \frak{r}$, there is a set $S_n \subseteq M_n$ which splits the family $\mathcal{H}_n$.
	Denote $M_{n+1} = S_n$ and $R_n = M_n \setminus S_n$.
	
	Clearly, we just constructed for all $n < \omega$ pairwise disjoint sets $R_n \subseteq \omega$ such that $A \cap R_n \ne \emptyset$ whenever $A \in \mathcal{H}$.
	It remains to define $f(x) = n$ for all $x \in R_n$.
	It does not matter how to define the function $f$ on the set $\omega \setminus \bigsqcup_{n < \omega} R_n$.
\end{proof}

Proposition \ref{Plessr} and Core Lemma \ref{LCore} imply the following

\begin{proposition}
	The product $\omega \times \mathrm{cf}(\frak{c})$ can be partitioned into:
	\begin{enumerate}
		\item[] \rm{($\frak{r} = \frak{c}$)} $\omega$-many thick subsets;
		
		\item[] \rm{($\frak{r} = \frak{c}$ and $\mathrm{cf}(\frak{c}) = \omega_1$)} $\omega_1$-many thick subsets.
	\end{enumerate}
\end{proposition}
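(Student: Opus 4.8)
The plan is to apply Core Lemma \ref{LCore} with the parameters $\kappa = \mu = \omega$, $\lambda = \mathrm{cf}(\frak{c})$ and $\tau = \omega$. First I would observe that with this choice $\kappa^\mu = \omega^\omega = 2^\omega = \frak{c}$, so the hypothesis $\mathrm{cf}(\kappa^\mu) = \lambda$ reads exactly $\mathrm{cf}(\frak{c}) = \mathrm{cf}(\frak{c})$ and holds trivially. Moreover $[\kappa]^\mu = [\omega]^\omega$, and a $(\mu, \lambda)$-thick subset of $\kappa \times \lambda = \omega \times \mathrm{cf}(\frak{c})$ is in this case nothing but a thick subset, since here $\mu = \omega = |\omega|$ and $\lambda = \mathrm{cf}(\frak{c}) = |\mathrm{cf}(\frak{c})|$, so that $(\mu,\nu)$-thickness coincides with $(|A|,|B|)$-thickness for the factors at hand.

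Next I would verify the breakability hypothesis of the Core Lemma. It requires that every subfamily of $[\omega]^\omega$ of cardinality less than $\kappa^\mu = \frak{c}$ be $\tau$-breakable, i.e. $\omega$-breakable. This is precisely where the assumption $\frak{r} = \frak{c}$ enters: under $\frak{r} = \frak{c}$, any $\mathcal{H} \subseteq [\omega]^\omega$ with $|\mathcal{H}| < \frak{c} = \frak{r}$ satisfies the hypothesis of Proposition \ref{Plessr}, and hence is $\omega$-breakable. Thus both hypotheses of Core Lemma \ref{LCore} are met with $\tau = \omega$.

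With this in place, the first conclusion of Core Lemma \ref{LCore} yields a partition of $\omega \times \mathrm{cf}(\frak{c})$ into $\tau = \omega$-many $(\omega, \mathrm{cf}(\frak{c}))$-thick subsets, that is, into $\omega$-many thick subsets, which is the first item. For the second item I would invoke the ``moreover'' clause of the Core Lemma: when $\mathrm{cf}(\frak{c}) = \omega_1$ we have $\tau^+ = \omega^+ = \omega_1 = \lambda$, so the Core Lemma additionally provides a partition into $\lambda = \omega_1$-many $(\omega, \omega_1)$-thick, i.e. thick, subsets.

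There is essentially no obstacle beyond bookkeeping: the only points requiring care are matching the Core Lemma's abstract parameters to the concrete cardinals, in particular recognizing that the continuum appears as $\omega^\omega$ so that $\mathrm{cf}(\kappa^\mu)$ is $\mathrm{cf}(\frak{c})$, and that $\frak{r} = \frak{c}$ is exactly the hypothesis making every subfamily of $[\omega]^\omega$ of size below $\frak{c}$ fall under Proposition \ref{Plessr}.
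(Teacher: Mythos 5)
Your proof is correct and is exactly the paper's argument: the paper likewise applies Core Lemma \ref{LCore} with $\overline{\kappa} = \overline{\mu} = \overline{\tau} = \omega$ and $\overline{\lambda} = \mathrm{cf}(\frak{c})$, with the breakability hypothesis supplied by Proposition \ref{Plessr} under $\frak{r} = \frak{c}$. Your write-up just spells out the bookkeeping (that $\omega^\omega = \frak{c}$, that $(\omega, \mathrm{cf}(\frak{c}))$-thick coincides with thick here, and that the ``moreover'' clause handles the case $\mathrm{cf}(\frak{c}) = \omega_1$) which the paper leaves implicit.
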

\begin{proof}
	Here we apply Core Lemma \ref{LCore} to $\overline{\kappa} = \overline{\mu} = \overline{\tau} = \omega$ and $\overline{\lambda} = \mathrm{cf}(\frak{c})$.
\end{proof}

Using Proposition \ref{PCof}, we obtain the following result.

\begin{theorem}[$\frak{r} = \frak{c}$]\label{TUnspl}
	If $\mathrm{cf}(\kappa) = \omega$ and $\mathrm{cf}(\lambda) = \mathrm{cf}(\frak{c})$, then the product $\kappa \times \lambda$ can be partitioned into $\omega$-many thick subsets.
	
	If, moreover, $\mathrm{cf}(\frak{c}) = \omega_1$, then the product $\kappa \times \lambda$ can be partitioned into $\omega_1$-many thick subsets.
\end{theorem}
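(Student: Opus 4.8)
The plan is to obtain this theorem as a purely formal consequence of the preceding Proposition, which (under $\frak{r} = \frak{c}$) divides $\omega \times \mathrm{cf}(\frak{c})$ into $\omega$-many thick subsets, and into $\omega_1$-many thick subsets when in addition $\mathrm{cf}(\frak{c}) = \omega_1$. The two hypotheses $\mathrm{cf}(\kappa) = \omega$ and $\mathrm{cf}(\lambda) = \mathrm{cf}(\frak{c})$ are exactly what is needed to identify, as products of cardinals, $\mathrm{cf}(\kappa) \times \mathrm{cf}(\lambda)$ with $\omega \times \mathrm{cf}(\frak{c})$. So the base case is handed to us for free, and all that remains is to inflate each factor from its cofinality up to the cardinal itself; this is precisely the job of Proposition \ref{PCof}, applied twice.

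Concretely, set $\tau = \omega$ (respectively $\tau = \omega_1$ in the second case). First I would record that the preceding Proposition supplies a division of $\mathrm{cf}(\kappa) \times \mathrm{cf}(\lambda) = \omega \times \mathrm{cf}(\frak{c})$ into $\tau$-many thick subsets. Then I would apply Proposition \ref{PCof} in its first form, applied to the product $\mathrm{cf}(\kappa) \times \lambda$, to pass from this division of $\mathrm{cf}(\kappa) \times \mathrm{cf}(\lambda)$ to a division of $\mathrm{cf}(\kappa) \times \lambda$ into $\tau$-many thick subsets. Finally I would apply Proposition \ref{PCof} once more, this time in its second form applied to the product $\kappa \times \lambda$, to pass from the division of $\mathrm{cf}(\kappa) \times \lambda$ to the desired division of $\kappa \times \lambda$ into $\tau$-many thick subsets.

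The argument is entirely formal, so the only thing requiring care is keeping track of the meaning of ``thick'' through each reduction, since the default dimensions change with the factors. The base division consists of $(\mathrm{cf}(\kappa), \mathrm{cf}(\lambda))$-thick sets; the first application of Proposition \ref{PCof} promotes these to $(\mathrm{cf}(\kappa), \lambda)$-thick sets of $\mathrm{cf}(\kappa) \times \lambda$; and the second promotes them to $(\kappa, \lambda)$-thick sets of $\kappa \times \lambda$, which is ``thick'' in the intended sense. I would also note that the degenerate situations (for instance $\kappa = \omega$, or $\lambda$ regular) cause no trouble, since there the relevant instance of Proposition \ref{PCof} is simply the identity. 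I do not anticipate any genuine obstacle: the real content resides in Proposition \ref{PCof} and in the preceding Proposition on $\omega \times \mathrm{cf}(\frak{c})$, and this theorem is the routine packaging of those two results.
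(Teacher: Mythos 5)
Your proposal is correct and is essentially the paper's own argument: the paper derives Theorem \ref{TUnspl} from the preceding Proposition on $\omega \times \mathrm{cf}(\frak{c})$ precisely by invoking Proposition \ref{PCof}, and your two successive applications (first inflating $\mathrm{cf}(\lambda)$ to $\lambda$, then $\mathrm{cf}(\kappa)$ to $\kappa$) just make explicit what the paper leaves implicit. Your bookkeeping of which thickness is meant at each stage is also the right check, so there is nothing to correct.
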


\section{Thick subset approach. Negative results}\label{SNeg}

Results of this section give no information on resolvability, they only demonstrate restrictions of our method.

Let us recall \cite[p.~107]{Jech2006}

\begin{theorem}[Ramsey]\label{TRamsey}
	If $n < \omega$, then for every function $f : [\omega]^2 \rightarrow n$ there is a set $A \in [\omega]^\omega$ and a number $k < n$ such that $f(E) = k$ whenever $E \in [A]^2$.
\end{theorem}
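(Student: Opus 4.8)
The plan is to run the classical recursive extraction of a sequence of ``pivot'' points, each of which induces a single color on an infinite tail of the remaining points, and then to apply the pigeonhole principle one last time to the resulting sequence of colors.

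First I would construct, recursively on $k < \omega$, a decreasing sequence of infinite sets $\omega = A_0 \supseteq A_1 \supseteq \cdots$, a sequence of points $a_k$ and a sequence of colors $c_k < n$ as follows. Given an infinite set $A_k$, let $a_k = \min A_k$ and consider the coloring $m \mapsto f(\{a_k, m\})$ of the infinite set $A_k \setminus \{a_k\}$ by $n$ colors. By the (infinite) pigeonhole principle there is a color $c_k < n$ for which the set $A_{k+1} = \{m \in A_k \setminus \{a_k\} : f(\{a_k, m\}) = c_k\}$ is infinite. Since $a_{k+1} = \min A_{k+1}$ and $A_{k+1} \subseteq A_k \setminus \{a_k\}$, the points $a_k$ form a strictly increasing sequence, so the map $k \mapsto a_k$ is injective.

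The crucial observation I would then record is that for all $i < j < \omega$ one has $a_j \in A_{i+1}$, and hence $f(\{a_i, a_j\}) = c_i$. Indeed, $a_j \in A_j \subseteq A_{i+1}$ by the nesting of the $A_k$, and every element of $A_{i+1}$ receives color $c_i$ when paired with $a_i$. Thus the color of a pair $\{a_i, a_j\}$ depends only on the smaller index $i$.

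Finally I would apply the pigeonhole principle to the sequence $(c_k)_{k < \omega}$: since there are only $n$ colors but infinitely many indices, some color $k^\ast < n$ satisfies $c_i = k^\ast$ for infinitely many $i$. Setting $A = \{a_i : c_i = k^\ast\}$ then gives a set in $[\omega]^\omega$, and for any two of its elements $a_i < a_j$ the observation above yields $f(\{a_i, a_j\}) = c_i = k^\ast$, so $f$ is constant with value $k^\ast$ on $[A]^2$. I expect the only point requiring care to be the verification that the pivots are strictly increasing (so that $A$ is genuinely infinite) and that the forward color $c_i$ of $a_i$ is inherited by every later pivot $a_j$, which is exactly what the nesting $A_j \subseteq A_{i+1}$ guarantees.
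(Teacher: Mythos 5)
Your proof is correct: it is the standard ``pivot'' argument for the infinite Ramsey theorem for pairs --- recursively shrink to sets $A_{k+1}$ on which the pivot $a_k$ induces a single color $c_k$, observe that the color of $\{a_i,a_j\}$ depends only on the smaller index, and finish with the pigeonhole principle on the sequence $(c_k)$. Note that the paper itself offers no proof of this statement; it states Ramsey's theorem as a classical known result and uses it as a black box (to prove that $\omega\times\omega$ cannot be divided into $3$ thick subsets), so your argument simply supplies the canonical proof, and every step of it --- the strict monotonicity of the pivots, the nesting $A_j \subseteq A_{i+1}$, and the final extraction of a constant-color subsequence --- checks out.
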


\begin{corollary}\label{POmega}
	The product $\omega \times \omega$ cannot be partitioned into $3$ thick subsets.
\end{corollary}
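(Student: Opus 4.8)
The plan is to argue by contradiction, converting a hypothetical partition into a finite colouring of pairs to which Ramsey's theorem applies. So suppose $\omega \times \omega = E_0 \sqcup E_1 \sqcup E_2$ with each $E_k$ thick, and write $\chi(p) = k$ to mean $p \in E_k$; thus $\chi : \omega \times \omega \to 3$ encodes the partition. Recall that here ``thick'' means $(\omega,\omega)$-thick, i.e. $E_k \cap (M \times N) \ne \emptyset$ for all $M, N \in [\omega]^\omega$; so to reach a contradiction it suffices to produce infinite $M, N \subseteq \omega$ and an index $c$ with $(M \times N) \cap E_c = \emptyset$.

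The key point is that I must record the colours of \emph{both} mirror points $(i,j)$ and $(j,i)$, not just of the above-diagonal one. Accordingly I would define $f : [\omega]^2 \to 3 \times 3$ by $f(\{i,j\}) = (\chi(i,j),\chi(j,i))$ for $i < j$. Since $3 \times 3$ has $9$ elements, Theorem \ref{TRamsey} (applied with $n = 9$) yields an infinite $A \subseteq \omega$ and a fixed pair $(a,b) \in 3 \times 3$ such that $\chi(i,j) = a$ and $\chi(j,i) = b$ for all $i < j$ in $A$. In other words, on $A \times A$ every off-diagonal point has colour $a$ above the diagonal and $b$ below it, so only the two values $a,b$ occur off the diagonal.

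Now I would split $A$ into two disjoint infinite subsets $M$ and $N$. Because $M \cap N = \emptyset$, every point $(p,q) \in M \times N$ satisfies $p \ne q$, hence lies off the diagonal of $A \times A$ and so carries colour $a$ (if $p < q$) or $b$ (if $p > q$); that is, $M \times N \subseteq E_a \cup E_b$. Since at most the two values $a,b$ are used, I may pick $c \in \{0,1,2\} \setminus \{a,b\}$, and then $(M \times N) \cap E_c = \emptyset$ with $M, N \in [\omega]^\omega$, contradicting the thickness of $E_c$.

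I do not expect a genuine obstacle here: the content is entirely in the two design choices just made. The first is encoding the pair of colours $(\chi(i,j),\chi(j,i))$ rather than a single colour, since a lone above-diagonal colour would leave the below-diagonal points of $M \times N$ uncontrolled. The second is taking $M$ and $N$ \emph{disjoint}, which deletes the diagonal and thereby confines $M \times N$ to the two homogeneous colours. Once these are in place, everything reduces to Ramsey's theorem and the definition of thickness, so the only thing to watch is this small amount of book-keeping.
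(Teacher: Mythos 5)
Your proof is correct and follows essentially the same strategy as the paper's: use Ramsey's theorem to make the above-diagonal and below-diagonal colours homogeneous on a single infinite set, then split that set into two disjoint infinite pieces $M,N$ so that $M \times N$ meets at most two of the three classes, contradicting thickness of the third. The only cosmetic difference is that you apply Ramsey once to the product colouring $[\omega]^2 \to 3 \times 3$ (i.e.\ with $9$ colours), whereas the paper applies the $3$-colour Ramsey theorem twice in succession (first to $f\{m,n\} = h(m,n)$, then, inside the resulting homogeneous set, to $g\{m,n\} = h(n,m)$); the two devices are interchangeable here.
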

\begin{proof}
	Suppose $h : \omega \times \omega \to 3$.
	We will show that at least one of the sets $h^{-1}(k)$ for $k < 3$ is not thick.
	
	For every $m < n < \omega$ we define $f\{m,n\} = h(m, n)$ and $g\{m, n\} = h(n, m)$.
	By Ramsey's theorem \ref{TRamsey} there is a set $A \in [\omega]^\omega$ and a number $i < 3$ such that $f(E) = i$ for all $E \in [A]^2$.
	Again by Ramsey's theorem there is a set $B \in [A]^\omega$ and a number $j < 3$ such that $g(E) = j$ for all $E \in [B]^2$.
	Finally, let us partition $B$ into two infinite subsets $K$ and $L$.
	It is easy to see that for all $m \in K$ and $n \in L$ we have either $h(m, n) = i$ or $h(m, n) = j$.
\end{proof}

The same is true for a weakly compact cardinal instead of $\omega$ \cite[Definition 9.8; the arrow notation is defined on page 109]{Jech2006}.

One cannot replace $\omega$ by a singular cardinal in the Ramsey's theorem. However,

\begin{theorem}
	Suppose $\mathrm{cf}(\kappa) = \omega$ and $2^\mu < \kappa$ whenever $\mu < \kappa$.
	Then the product $\kappa \times \kappa$ cannot be partitioned into $3$ thick subsets.
\end{theorem}
\begin{proof}
	By Corollary \ref{POmega} we may assume that $\kappa > \omega$.
	Fix an arbitrary partition $\kappa = \bigsqcup_{n < \omega} A_n$ such that the cardinals $\mu_n = |A_n|$ are regular, $\mu_n > \frak{c}$ and $2^{\mu_n} < \mu_{n+1} < \kappa$ for all $n < \omega$.
	Suppose $h : \kappa \times \kappa \to 3$.
	As before, we will show that at least one of the sets $h^{-1}(i)$ for $i < 3$ is not thick.
	
	For every $n < \omega$ and all $x \in A_n$ denote by $f_x$ the function $\bigsqcup_{k < n}A_k \to 3$ defined as $f_x(y) = h(x,y)$.
	Clearly, $|\{f_x : x \in A_n\}| \leq 2^{\mu_0 + \ldots + \mu_{n-1}} < \mu_n$,
	so there is a function $\overline{f}_n$ such that the set $K_n = \{x \in A_n : f_x = \overline{f}_n\}$ has cardinality $\mu_n$.
	In a similar way, for all $y \in A_n$ denote by $g_y$ the function $\bigsqcup_{k < n}A_k \to 3$ defined as $g_y(x) = h(x,y)$
	and note that there is a function $\overline{g}_n$ such that the set $L_n = \{y \in A_n : g_y = \overline{g}_n\}$ has cardinality $\mu_n$.
	
	Denote $\mathbb{N}_{>n} = \omega \setminus (n+1) = \{n+1, n+2, \ldots\}$.	
	For every $n < \omega$ and all $x \in K_n$ denote by $\varphi_x$ the function $\mathbb{N}_{>n} \to 3$ defined as $\varphi_x(k) = h(x,y)$, where $y \in L_k$.
	By the choice of $L_k$, this does not depend on $y$.
	Since $\mu_n > \frak{c}$, there is a function $\overline{\varphi}_n$ such that the set $P_n = \{x \in K_n : \varphi_x = \overline{\varphi}_n\}$ has cardinality $\mu_n$.
	In a similar way, for all $y \in L_n$ denote by $\psi_y$ the function $\mathbb{N}_{>n} \to 3$ defined as $\psi_y(k) = h(x,y)$, where $x \in L_k$,
	and note that there is a function $\overline{\psi}_n$ such that the set $Q_n = \{y \in L_n : \psi_y = \overline{\psi}_n\}$ has cardinality $\mu_n$.
	
	By the choice of the sets $P_m$ and $Q_n$, the function $h$ is constant on the set $P_m \times Q_n$ whenever $m \ne n$.
	Define the function $\chi : \omega \times \omega \to 3$ in the following way: $\chi(m,n) = h(x,y)$, where $x \in P_{2m}$ and $y \in Q_{2n+1}$.
	By Corollary \ref{POmega}, there are infinite sets $M,N \subseteq \omega$ and a number $i < 3$ such that $(M \times N) \cap \chi^{-1}(i) = \emptyset$.
	Hence, for the sets $U = \bigsqcup_{m \in M} P_{2m}$ and $V = \bigsqcup_{n \in N} Q_{2n+1}$ we have $(U \times V) \cap h^{-1}(i) = \emptyset$, whereas $|U| = |V| = \kappa$.	
\end{proof}

\begin{question}
	Is it consistent with ZFC that $\aleph_\omega \times \aleph_\omega$ can be partitioned into $3$ thick subsets? Into $\aleph_\omega$-many thick subsets?
\end{question}

\begin{question}
	Is it consistent with ZFC that $\omega_1 \times \omega_1$ cannot be partitioned into $3$ thick subsets? Into $\omega_1$-many thick subsets?
\end{question}

\begin{question}
	Are there infinite cardinals $\kappa > \mu$ and $\lambda > \nu$ such that $\kappa \times \lambda$ can be partitioned into $2$ $(\mu, \nu)$-thick subsets?
\end{question}

\section{Corollaries in terms of resolvability}\label{SCorollaries}

The following result is an immediate corollary of Theorem \ref{TKur}.

\begin{theorem}\label{T3}
	If $2^\kappa = \kappa^+$, $X$ and $Y$ are isodyne spaces, $\{|X|, \mathrm{cf}|X|\} \cap \{\kappa, \kappa^+\} \ne \emptyset$ and $\mathrm{cf}|Y| = \kappa^+$, then the space $X \times Y$ is $\kappa^+$-resolvable.
\end{theorem}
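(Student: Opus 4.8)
The plan is to obtain this as an immediate consequence of the purely combinatorial Theorem \ref{TKur}, combined with the observation (recorded just after the definition of thickness) that every thick subset of a product of isodyne spaces is dense. There is no genuinely new topological content here: the entire weight of the argument rests on Theorem \ref{TKur}, which I may assume, and the remaining work is simply the translation of ``thick'' into ``dense''.

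First I would apply Theorem \ref{TKur} to the underlying sets $A = X$ and $B = Y$. The cardinal hypotheses of that theorem read $2^\kappa = \kappa^+$, $\{|A|, \mathrm{cf}|A|\} \cap \{\kappa, \kappa^+\} \ne \emptyset$ and $\mathrm{cf}|B| = \kappa^+$; these coincide verbatim with the assumptions imposed here on $X$ and $Y$. Hence Theorem \ref{TKur} yields a partition $X \times Y = \bigsqcup_{\xi < \kappa^+} E_\xi$ into $\kappa^+$-many thick, that is $(|X|, |Y|)$-thick, subsets.

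Next I would verify that each $E_\xi$ is dense in the product space $X \times Y$. It suffices to meet every nonempty basic open box $U \times V$, with $U$ open in $X$ and $V$ open in $Y$. Because $X$ is isodyne we have $|X| = \Delta(X)$, so every nonempty open $U$ satisfies $\Delta(X) \le |U| \le |X|$, forcing $|U| = |X|$ and thus $U \in [X]^{|X|}$; likewise $V \in [Y]^{|Y|}$. Thickness of $E_\xi$ then gives $E_\xi \cap (U \times V) \ne \emptyset$, so $E_\xi$ is dense.

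Therefore the sets $(E_\xi)_{\xi < \kappa^+}$ form a partition of $X \times Y$ into $\kappa^+$-many pairwise disjoint dense subsets, which is precisely the assertion that $X \times Y$ is $\kappa^+$-resolvable. The only step that even requires checking is the passage from thickness to density, and this invokes nothing beyond the isodyne hypothesis; I do not anticipate a real obstacle, since the positive combinatorial decomposition has already been supplied by Theorem \ref{TKur} and, as the text remarks, the statement is an immediate corollary.
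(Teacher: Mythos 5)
Your proposal is correct and is exactly the paper's argument: the paper presents Theorem \ref{T3} as an immediate corollary of Theorem \ref{TKur}, relying on the same observation (stated after the definition of thickness) that a thick subset of a product of isodyne spaces is dense, which you verify carefully via $|U| = \Delta(X) = |X|$ for nonempty open $U$.
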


\begin{corollaries}[GCH]
Suppose $X$ and $Y$ are isodyne spaces. Then:
\begin{enumerate}
	\item $X \times Y$ is maximally resolvable, providing that $|X| = |Y|$ is an isolated cardinal;
	
	\item $X \times Y$ is $\mathrm{cf}|X|$-resolvable, providing that $\mathrm{cf}|X| = \mathrm{cf}|Y|$ is an isolated cardinal;
	
	\item $X \times Y$ is maximally resolvable, providing that the cardinals $|X|$ and $|Y|$ are neighboring.
	
	\item $X \times Y$ is $\omega_1$-resolvable, providing that $|X| = \aleph_\omega$ and $|Y| = \omega_1$.
\end{enumerate}
\end{corollaries}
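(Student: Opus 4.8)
The plan is to deduce all four statements directly from Theorem \ref{T3}, exploiting two simplifications that GCH affords. First, under GCH one has $2^\kappa = \kappa^+$ for every infinite cardinal $\kappa$, so the arithmetic hypothesis of Theorem \ref{T3} is automatically satisfied for any choice of $\kappa$ and plays no role beyond bookkeeping. Second, since $X$ and $Y$ are isodyne, every nonempty open box $U \times V$ has $|U \times V| = |U| \cdot |V| = |X| \cdot |Y|$, and as every nonempty open set contains such a box we get $\Delta(X \times Y) = |X| \cdot |Y| = \max(|X|,|Y|)$; consequently \emph{maximally resolvable} for the product means precisely $\max(|X|,|Y|)$-resolvable. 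I will also use repeatedly that an isolated cardinal is a successor $\kappa^+$ and hence regular, so that $|Y| = \kappa^+$ forces $\mathrm{cf}|Y| = \kappa^+$, and that $X \times Y$ is homeomorphic to $Y \times X$, which lets me match the asymmetric hypothesis $\mathrm{cf}|Y| = \kappa^+$ by interchanging the factors when convenient.

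With these preliminaries in place, each part reduces to selecting the right $\kappa$. For (1), write the common value as $|X| = |Y| = \kappa^+$; then $|X| = \kappa^+ \in \{\kappa, \kappa^+\}$ and $\mathrm{cf}|Y| = \kappa^+$ by regularity, so Theorem \ref{T3} yields $\kappa^+$-resolvability, which equals $\Delta(X \times Y) = \kappa^+$ and is thus maximal. For (2), set $\mathrm{cf}|X| = \mathrm{cf}|Y| = \kappa^+$; now $\mathrm{cf}|X| = \kappa^+$ meets the membership condition and $\mathrm{cf}|Y| = \kappa^+$ is given, so the theorem delivers $\kappa^+ = \mathrm{cf}|X|$-resolvability. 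For (3), the neighboring cardinals are $\{|X|,|Y|\} = \{\mu, \mu^+\}$; after possibly swapping the factors I may assume $|Y| = \mu^+$ and $|X| = \mu$, take $\kappa = \mu$, and observe $|X| = \kappa \in \{\kappa,\kappa^+\}$ together with $\mathrm{cf}|Y| = \mu^+ = \kappa^+$, which gives $\mu^+$-resolvability, maximal since $\Delta(X \times Y) = \mu^+$. For (4), take $\kappa = \omega$: then $\mathrm{cf}|X| = \mathrm{cf}(\aleph_\omega) = \omega = \kappa$ lies in $\{\kappa,\kappa^+\}$, while $\mathrm{cf}|Y| = \omega_1 = \kappa^+$, so Theorem \ref{T3} (with $2^\omega = \omega_1$ supplied by GCH) gives $\omega_1$-resolvability.

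Since every part amounts to a single invocation of Theorem \ref{T3}, I expect no serious combinatorial obstacle; the content is entirely in the verification of hypotheses. The one genuinely substantive point, where I would be most careful, is the computation $\Delta(X \times Y) = \max(|X|,|Y|)$ and the attendant identification of \emph{maximal resolvability} with $\max(|X|,|Y|)$-resolvability in parts (1) and (3): this is what certifies that the $\kappa^+$-resolvability produced by the theorem is actually optimal and not merely a lower bound. The secondary point to state explicitly is the appeal to symmetry in (3), needed because the hypothesis $\mathrm{cf}|Y| = \kappa^+$ singles out the second factor, and the larger of two neighboring cardinals is the successor, hence the regular one whose cofinality equals itself.
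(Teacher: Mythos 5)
Your proposal is correct and matches the paper's intent exactly: the Corollaries are stated without proof precisely because each part is the routine verification that a suitable choice of $\kappa$ (namely $|X| = \kappa^+$, $\mathrm{cf}|X| = \kappa^+$, $\kappa = \min(|X|,|Y|)$, and $\kappa = \omega$ respectively) satisfies the hypotheses of Theorem \ref{T3}, with GCH supplying $2^\kappa = \kappa^+$ in every case. Your added care about $\Delta(X \times Y) = \max(|X|,|Y|)$ for isodyne factors and the appeal to symmetry in (3) are exactly the right supporting observations.
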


The third corollary was also obtained by V.I.~Malykhin in \cite[Corollary 2]{Malykhin1973}.
The fourth corollary may be of interest, considering the second point of Theorem \ref{TJSS2023} of I.~Juh{\'a}sz, L.~Soukup and Z.~Szentmikl{\'o}ssy.

Theorem \ref{TUnspl} implies the following

\begin{theorem}[$\frak{r} = \frak{c}$]
	If $X$ and $Y$ are isodyne spaces, $\mathrm{cf}|X| = \omega$ and $\mathrm{cf}|Y| = \mathrm{cf}(\frak{c})$, then the space $X \times Y$ is $\omega$-resolvable.
	
	If, moreover, $\mathrm{cf}(\frak{c}) = \omega_1$, then the space $X \times Y$ is $\omega_1$-resolvable.
\end{theorem}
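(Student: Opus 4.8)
The plan is to read this off Theorem~\ref{TUnspl} in exactly the way Theorem~\ref{T3} was read off Theorem~\ref{TKur}: the combinatorial statement about dividing into thick subsets translates directly into a statement about dividing into dense subsets, provided the two spaces are isodyne. First I would set $\kappa = |X|$ and $\lambda = |Y|$. Because $X$ is isodyne we have $\kappa = \Delta(X)$, and by hypothesis $\mathrm{cf}(\kappa) = \mathrm{cf}|X| = \omega$; likewise $\lambda = \Delta(Y)$ and $\mathrm{cf}(\lambda) = \mathrm{cf}|Y| = \mathrm{cf}(\frak{c})$. These are precisely the hypotheses imposed on $\kappa$ and $\lambda$ in Theorem~\ref{TUnspl}.

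Next I would fix any bijections $X \leftrightarrow \kappa$ and $Y \leftrightarrow \lambda$, which induce a bijection $X \times Y \leftrightarrow \kappa \times \lambda$. Applying Theorem~\ref{TUnspl} to $\kappa \times \lambda$ yields a partition into $\omega$-many thick subsets, and, under the additional assumption $\mathrm{cf}(\frak{c}) = \omega_1$, a partition into $\omega_1$-many thick subsets. Transporting this partition back along the bijection produces a partition of $X \times Y$ into the same number of pairwise disjoint thick subsets $E_n$.

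Finally I would invoke the observation recorded in Section~\ref{SMain}: in a product of isodyne spaces every thick subset is dense. Concretely, any nonempty open $W \subseteq X \times Y$ contains a nonempty basic box $U \times V$, and since $X,Y$ are isodyne we have $|U| = \Delta(X) = \kappa$ and $|V| = \Delta(Y) = \lambda$; thickness of $E_n$ then forces $E_n \cap (U \times V) \ne \emptyset$, so each $E_n$ meets $W$. Hence each $E_n$ is dense, the $E_n$ are pairwise disjoint, and $X \times Y$ is $\omega$-resolvable (resp.\ $\omega_1$-resolvable), as claimed.

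There is essentially no obstacle here, since the real work has already been carried out in the combinatorial Theorem~\ref{TUnspl}. The only points requiring care are bookkeeping ones: confirming that the invariants $\kappa = \Delta(X)$ and $\lambda = \Delta(Y)$ delivered by the isodyne hypothesis have exactly the cofinalities demanded by Theorem~\ref{TUnspl}, and noting that a subset which is thick as a subset of $\kappa \times \lambda$ is precisely a $(\Delta(X), \Delta(Y))$-thick subset of $X \times Y$, which is what guarantees that each piece is dense.
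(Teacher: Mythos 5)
Your proposal is correct and follows exactly the route the paper takes: the theorem is stated there as an immediate corollary of Theorem~\ref{TUnspl}, obtained by transporting the partition into thick subsets along bijections $|X| \leftrightarrow X$, $|Y| \leftrightarrow Y$ and invoking the observation from Section~\ref{SMain} that thick subsets of a product of isodyne spaces are dense. Your bookkeeping of the cofinality hypotheses and the density argument via basic boxes is precisely the intended (and omitted) verification.
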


\begin{question}
	Suppose $X$ is an isodyne (Hausdorff, regular, Tychonoff...) space of cardinality $\aleph_\omega$.
	Is $X^2$ $3$-resolvable? Is it maximally resolvable?
\end{question}

\begin{question}
	Is it consistent with ZFC that for some isodyne (Hausdorff, regular, Tychonoff...) space $X$ of cardinality $\omega_1$ the square $X^2$ is not $3$- (or maximally) resolvable?
\end{question}



\bibliographystyle{model1a-num-names}
\bibliography{<your-bib-database>}

\end{document}